 \newtheorem{thm}{Theorem}[section]
 \newtheorem{prop}[thm]{Proposition}
 \theoremstyle{definition}
 \newtheorem{rem}[thm]{Remark}
 \numberwithin{equation}{section}
\theoremstyle{definition}
\theoremstyle{remark}
\begin{document}
\title{Structure of Helicity and Global Solutions of Incompressible
Navier-Stokes Equation}
\author{Zhen Lei\footnote{School of Mathematical Sciences; LMNS and Shanghai Key
  Laboratory for Contemporary Applied Mathematics, Fudan
  University, Shanghai 200433, P. R. China. {\it Email:
  leizhn@yahoo.com}}
\and Fang-Hua Lin\footnote{Courant Institute of Mathematics, New
  York University, USA; and Institute of Mathematical Sciences of NYU-ECNU at
  NYU-Shanghai, PRC. {\it Email: linf@cims.nyu.edu}}\and  Yi Zhou\footnote{School of Mathematical
Sciences; LMNS and Shanghai Key
  Laboratory for Contemporary Applied Mathematics, Fudan
  University, Shanghai 200433, P. R. China. {\it Email:
  yizhou@fudan.edu.cn}}}
\date{\today}
\maketitle

\begin{abstract}
In this paper we derive a new energy identity for the
three-dimensional incompressible Navier-Stokes equations by
a special structure of helicity. The new energy functional
is critical with respect to the natural scalings of the Navier-Stokes
equations. Moreover, it is conditionally coercive. As an application we
construct a family of finite energy smooth solutions to the
Navier-Stokes equations whose critical norms can be arbitrarily large.
\end{abstract}

\textbf{Keyword}:  Helicity, Navier-Stokes, global solutions, finite
energy.

\section{Introduction}%----------------------------introduction

The question of whether a solution of the three-dimensional (3D)
incompressible Navier-Stokes equations can develop a finite time
singularity from smooth initial data with finite energy is one of
the Millennium Prize problems \cite{Fefferman1}. The only
known coercive \textit{a priori} estimate is the Leray-Hopf energy
estimate which implies that the 3D Navier-Stokes equations are
supercritical with respect to its natural scalings. The latter may be
the essence of difficulties of this long standing open problem.

In this paper, by the virtue of a special structure of Helicity, we
derive a new \textit{a priori} energy estimate which is critical
with respect to the natural scalings for the 3D Navier-Stokes
equations. This new energy functional is coercive for a class of
initial data. Based on this \textit{a priori} estimate, a family of
finite energy global smooth and large solutions can then be
constructed. Current known examples of large smooth solutions to the
3D Navier-Stokes equations often assume both the axial symmetricity and the
vanishing of swirl component of the velocity, see \cite{L}, \cite{UY} and
\cite{HLL}.

Let us recall that the incompressible Navier-Stokes
equations in $\mathbb{R}_+ \times \mathbb{R}^3$ are:
\begin{equation}\label{NS}
\begin{cases}
\partial_tu + u\cdot\nabla u + \nabla p = \nu\Delta u,\quad t > 0, x \in \mathbb{R}^3,\\[-3mm]\\
\nabla\cdot u = 0,\quad t > 0, x \in \mathbb{R}^3,
\end{cases}
\end{equation}
where $u$ is the velocity field of the fluid, $p$ is the scalar
pressure and the constant $\nu$ is the viscosity.  To solve the
Navier-Stokes equations \eqref{NS} in $\mathbb{R}_+ \times
\mathbb{R}^3$, one assumes that the initial data
\begin{equation}\nonumber
u(0, x) = u_0(x)
\end{equation}
are divergence-free and possess certain regularity.

The known \textit{a priori} Leray-Hopf energy estimate satisfied by
classical solutions of \eqref{NS} is as follows:
\begin{equation}\label{EnergyL}
\sup_{t > 0}\|u(t, \cdot)\|_{L^2} \leq \|u_0\|_{L^2},\quad
\nu\int_0^\infty\|\nabla u(t, \cdot)\|_{L^2}^2dt \leq
\frac{1}{2}\|u_0\|_{L^2}^2.
\end{equation}
Recall the natural scalings of the Navier-Stokes equations: if $(u,
p)$ solves \eqref{NS}, so does $(u^\lambda, p^\lambda)$ for any
$\lambda > 0$, where
\begin{equation}\label{scaling}
u^\lambda(t, x) = \lambda u(\lambda t, \lambda x),\quad p^\lambda(t,
x) = \lambda^2p(\lambda t, \lambda x).
\end{equation}
As usual, we assign each $x_i$ a positive dimension $1$, $t$ a positive
dimension $2$, $u$ a negative dimension $-1$ and $p$ a negative dimension
$-2$. A simple dimensional analysis shows that all energy norms in
\eqref{EnergyL} have positive dimensions, and thus the
Navier-Stokes equations are \textit{supercritical} with respect to
the natural scalings. An example of dimensionless norm is
$L^\infty_t(\dot{H}^{\frac{1}{2}})$, and it will be related to
discussions below.

Denote
\begin{equation}\nonumber
D = \sqrt{-\Delta}.
\end{equation}
Our starting point is the following new energy identity.
\begin{thm}[Structure of Helicity]\label{thm-coercive}
Let $u \in L^\infty(0, T; H^1(\mathbb{R}^3)) \cap L^2(0, T;
H^2(\mathbb{R}^3))$ solve the incompressible Navier-Stokes equation
\eqref{NS}. For each $t \in [0, T)$, decompose $u(t, \cdot)$ as in
\eqref{decom}. Then one has
\begin{equation}\label{criticalenergy}
E_c(u_+) = E_c(u_-) + c_0, \quad \forall\ t \in [0, T),
\end{equation}
where the constant $c_0$ is given by
$$c_0 = \frac{1}{2}\big(\|u_{0+}\|_{\dot{H}^{\frac{1}{2}}}^2 - \|u_{0-}\|_{\dot{H}^{\frac{1}{2}}}^2\big),$$
and the critical energy $E_c(u)$ is defined as
\begin{equation}\nonumber
E_c(u) = \frac{1}{2}\|D^{\frac{1}{2}}u(t, \cdot)\|_{L^2}^2 +
\nu\int_0^t\|D^{\frac{1}{2}}\nabla u(s, \cdot)\|_{L^2}^2ds.
\end{equation}
\end{thm}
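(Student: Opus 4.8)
The plan is to recognize the quantity $E_c(u_+)-E_c(u_-)$ as nothing other than the helicity balance of the flow, rewritten in the helical variables. I take the defining property of the decomposition \eqref{decom} to be that $u_+$ and $u_-$ are the positive- and negative-helicity components, i.e. the Fourier projections onto the two eigendirections of the curl, so that $\nabla\times u_\pm=\pm D u_\pm$. Since these two components are built from orthogonal helical frames at each frequency, they stay $L^2$-orthogonal after the application of \emph{any} scalar Fourier multiplier, so that cross terms such as $\int u_+\cdot D u_-\,dx$ vanish. The first step is to record this orthogonality on the Fourier side and use it to express the helicity
$$H(t)=\int_{\mathbb{R}^3}u\cdot\omega\,dx,\qquad \omega=\nabla\times u=Du_+-Du_-,$$
as $H=\|u_+\|_{\dot{H}^{\frac{1}{2}}}^2-\|u_-\|_{\dot{H}^{\frac{1}{2}}}^2$, which is exactly twice the leading (non-integral) part of $E_c(u_+)-E_c(u_-)$.

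The second step is to compute the evolution of $H$. Differentiating and using that the curl is self-adjoint gives $\frac{d}{dt}H=2\int u_t\cdot\omega\,dx$. I then substitute the momentum equation in Lamb form, $u_t=-\nabla\big(\tfrac{1}{2}|u|^2+p\big)+u\times\omega+\nu\Delta u$: the gradient term integrates against the divergence-free $\omega$ to zero, and the Lamb term drops because $(u\times\omega)\perp\omega$ pointwise. This is precisely the algebraic reason helicity is an inviscid invariant, and it is the step that makes the whole nonlinearity disappear. Only the viscous term survives, giving $\frac{d}{dt}H=2\nu\int\Delta u\cdot\omega\,dx=-2\nu\int D^2u\cdot\omega\,dx$; expanding once more in helical components and discarding the cross terms yields $\frac{d}{dt}H=-2\nu\big(\|u_+\|_{\dot{H}^{\frac{3}{2}}}^2-\|u_-\|_{\dot{H}^{\frac{3}{2}}}^2\big)$.

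Combining the two steps produces
$$\frac{d}{dt}\Big(\tfrac{1}{2}\|u_+\|_{\dot{H}^{\frac{1}{2}}}^2-\tfrac{1}{2}\|u_-\|_{\dot{H}^{\frac{1}{2}}}^2\Big)+\nu\big(\|u_+\|_{\dot{H}^{\frac{3}{2}}}^2-\|u_-\|_{\dot{H}^{\frac{3}{2}}}^2\big)=0,$$
and integrating from $0$ to $t$, while observing that $\|D^{\frac{1}{2}}\nabla u_\pm\|_{L^2}=\|u_\pm\|_{\dot{H}^{\frac{3}{2}}}$, gives exactly $E_c(u_+)=E_c(u_-)+c_0$ with $c_0=\tfrac{1}{2}\big(\|u_{0+}\|_{\dot{H}^{\frac{1}{2}}}^2-\|u_{0-}\|_{\dot{H}^{\frac{1}{2}}}^2\big)$.

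The hard part will not be the algebra but its justification in the stated class $u\in L^\infty(0,T;H^1)\cap L^2(0,T;H^2)$. I expect the delicate points to be: (i) making sense of the pairing $\int u_t\cdot\omega\,dx$ and differentiating the $\dot{H}^{\frac{1}{2}}$ norms in time — here $\omega\in L^\infty(0,T;L^2)$ and the $\dot{H}^{\frac{3}{2}}$ norm is in $L^1_t$, so the quantities are finite, but to differentiate rigorously I would pass through a Galerkin or mollified approximation and take limits using the energy bounds; (ii) justifying the orthogonality cancellations and every integration by parts, which I would verify on the Fourier side where the helical frame is explicit; and (iii) confirming the pointwise cancellation $(u\times\omega)\cdot\omega=0$ contributes nothing, which is immediate once $u\times\omega$ and $\omega$ are both seen to be square-integrable via the Sobolev embeddings available from $H^2$. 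Once the identity holds for the smooth approximants, it persists in the limit.
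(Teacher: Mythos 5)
Your proposal is correct and follows essentially the same route as the paper: the helical decomposition with $\nabla\times u_\pm=\pm Du_\pm$, the strong orthogonality of $u_+$ and $u_-$ under Fourier multipliers, the identity $\frac{d}{dt}\int u\cdot\omega\,dx=2\nu\int\Delta u\cdot\omega\,dx$, and time integration. You are merely more explicit on two points the paper leaves implicit --- the Lamb-form cancellation $(u\times\omega)\cdot\omega=0$ behind the helicity balance, and the approximation argument needed at the stated regularity --- and your normalization $\int u\cdot\omega\,dx=\|u_+\|_{\dot{H}^{1/2}}^2-\|u_-\|_{\dot{H}^{1/2}}^2$ is in fact the correct one (the paper's factor $\frac{1}{4}$ is a harmless slip that cancels from both sides).
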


The above energy identity which is based on the
special structure of helicity gives us an \textit{a priori}
estimate. What may be crucial is that this \textit{a
priori} estimate \eqref{criticalenergy} is critical with respect to
the natural scalings \eqref{scaling} of the Navier-Stokes equations.
Moreover, for the initial data so that either  $u_+$ or $u_-$
dominates,  this \textit{a priori} estimate \eqref{criticalenergy}
becomes coercive. The proof of Theorem \ref{thm-coercive} will be
presented in Section 2.

As an application of Theorem \ref{thm-coercive}, we shall construct a
family of finite energy smooth large solutions for the 3D
incompressible Navier-Stokes equations. Define $n(\xi)$ as a measurable vector field
which is smooth except for finite many singular points and satisfies $\xi\cdot n(\xi)$, $|n(\xi)|=1$. For $0 < \delta < 1$, let
\begin{equation}\label{condi-1}
\alpha \in \mathcal{S}(\mathbb{R}^3),\quad {\rm supp}\ \alpha
\subset \{1 - \delta < |\xi| < 1 + \delta\}.
\end{equation}
Assume further that $\alpha$ vanishes in a neighbourhood of the singular points of $n(\xi)$ and
\begin{equation}\label{condi-2}
A = \int_{1 - \delta}^{1 + \delta}\sup_{\omega \in
\mathcal{S}^2}\big(|\alpha(\lambda\omega)| +
|\nabla\alpha(\lambda\omega)|\big)d\lambda < \infty.
\end{equation}
 Define
\begin{equation}\label{data-1}
g(x) = \int_{1 - \delta < |\xi| < 1 +
\delta}\big(n(\xi)\sin(x\cdot\xi) + |\xi|^{- 1}\xi\times
n(\xi)\cos(x\cdot\xi)\big)\alpha(\xi)d\xi.
\end{equation}
Let $\chi \in C_0^\infty(\mathbb{R}^3)$ be a cut-off function such
that
\begin{equation}\label{1-1}
\chi \equiv 1\ {\rm for}\ |x| \leq 1,\quad \chi \equiv 0\ {\rm for}\
|x| \geq 2,\quad |\nabla^k\chi| \leq 2\ (0 \leq k \leq 2)
\end{equation}
and
\begin{equation}\label{1.1}
\chi_M(x) = \chi(\frac{x}{M}),\quad M > 0.
\end{equation}
We have the following theorem:
\begin{thm}\label{thm-1}
Let $g$ is given in \eqref{data-1} where $\alpha$ is given in
\eqref{condi-1} and satisfies \eqref{condi-2}. Let $\chi$ be any
standard cut-off function satisfying \eqref{1-1}-\eqref{1.1}. There
exist positive constants $M \geq \delta^{-\frac{1}{2}} \gg 1$ such
that the 3D incompressible Navier-Stokes equations \eqref{NS} with
the initial data $u_0 = h_0 + \chi_Mg$ are globally well-posed
provided that $\|h_0\|_{H^1} \leq M^{-\frac{1}{2}}$.
\end{thm}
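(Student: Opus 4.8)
The plan is to run a bootstrap/continuation argument: by the standard local theory in $H^1$ the data $u_0 = h_0 + \chi_M g$ generates a unique smooth solution on a maximal interval $[0,T^*)$, and global well-posedness follows once we produce a uniform-in-time bound on a critical norm that rules out $T^* < \infty$. The engine for that bound will be the helicity identity \eqref{criticalenergy}, so the entire argument is organized around showing that the solution remains, for all time, dominated by a single helicity class.

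First I would analyze the data. A direct computation on each mode in \eqref{data-1} shows that $n(\xi)\sin(x\cdot\xi) + |\xi|^{-1}\xi\times n(\xi)\cos(x\cdot\xi)$ is a Beltrami field with $\nabla\times(\cdot) = |\xi|\,(\cdot)$; since $\xi\cdot n(\xi)=0$ and $|n(\xi)|=1$ this mode is divergence free and unit polarized, so $g$ lies entirely in the positive helicity subspace, $\nabla\times g = Dg$. Because $\alpha$ is supported in the thin shell $1-\delta<|\xi|<1+\delta$ of \eqref{condi-1}, the operator $D$ acts on $g$ almost like the identity: $\|(D-\mathrm{Id})g\|$ is controlled by $\delta$ times a norm of $g$, with all constants governed by $A$ from \eqref{condi-2}. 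The cut-off $\chi_M$ in \eqref{1.1} then has two effects that must be tracked quantitatively: it makes $\chi_M g$ only approximately Beltrami, with commutator errors of size $O(M^{-1})$, and it produces a small negative-helicity component $(\chi_M g)_-$ of the same order. Meanwhile the localized field has large critical norm, $\|\chi_M g\|_{\dot H^{1/2}}\sim M^{3/2}$, so $c_0\sim M^3$ is large---this is precisely the mechanism by which the constructed solutions carry arbitrarily large critical norm. The remaining piece $h_0$ contributes to both helicities but is negligible since $\|h_0\|_{H^1}\le M^{-1/2}$.

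The heart of the argument is to show that the minority (negative) helicity part $u_-$ stays small for all time. Projecting \eqref{NS} onto the two helical subspaces and using the Leray projection to discard pressure gradients, the forcing of the $u_-$ equation comes from the nonlinear interactions; here the key cancellation is that the self-interaction of an exactly eigenvalue-one Beltrami field is a pure gradient, since $u\cdot\nabla u = \nabla(\tfrac{1}{2}|u|^2) - u\times(\nabla\times u)$ and $\nabla\times u = Du = u$ forces $u\times(\nabla\times u)=0$. Consequently the production of negative helicity is not driven by the (large) bulk of $u_+$ but only by the small errors catalogued above---the $O(\delta)$ departure of $D$ from the identity on the shell, the $O(M^{-1})$ cut-off commutators, and the small datum $h_0$. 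I would therefore close a continuity argument in the critical space: assuming $\|u_-(t)\|_{\dot H^{1/2}}$ together with its dissipation stays below a threshold $\varepsilon$ on $[0,T)$, I would run an energy estimate for $u_-$ in $\dot H^{1/2}$ and recover a strictly better bound, provided $\delta$, $M^{-1}$ and $\|h_0\|_{H^1}$ are small enough, which is exactly what the hypotheses $M\ge\delta^{-1/2}\gg1$ and $\|h_0\|_{H^1}\le M^{-1/2}$ guarantee.

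Once $u_-$ is controlled, Theorem \ref{thm-coercive} does the rest: the identity \eqref{criticalenergy} reads $E_c(u_+) = E_c(u_-) + c_0$, and since the right-hand side is pinned near $c_0$ (because $E_c(u_-)$ stays below $\varepsilon$), we obtain $\tfrac{1}{2}\|u_+(t)\|_{\dot H^{1/2}}^2 + \nu\int_0^t\|D^{1/2}\nabla u_+\|_{L^2}^2\,ds \le c_0 + \varepsilon$ uniformly in $t$. Adding the small $u_-$ contribution yields a uniform-in-time bound on $\|u(t)\|_{\dot H^{1/2}}$ together with $\int_0^\infty \|u(s)\|_{\dot H^{3/2}}^2\,ds < \infty$, a scaling-invariant control in the critical space with square-integrable-in-time dissipation; by the critical regularity criterion this excludes finite-time singularities and promotes the local solution to a global one. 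The main obstacle is the minority-helicity estimate of the previous paragraph: the drift by $u_+$ is of large amplitude ($c_0\sim M^3$), so the smallness of the error terms must be strong enough to beat it. The saving feature is precisely the scaling-critical (dimensionless) nature of \eqref{criticalenergy}, so that the large amplitude spread over the ball of radius $M$ counts as a bounded quantity in the relevant norm; balancing these competing large and small parameters is the delicate point, and it is what dictates the coupling $M\ge\delta^{-1/2}$ between the shell width and the localization scale.
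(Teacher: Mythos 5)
Your overall architecture (keep the minority helicity $u_-$ small, feed this into the identity \eqref{criticalenergy} to get a uniform bound on $E_c(u_+)$, then conclude by a Serrin-type criterion) is genuinely different from the paper's, and the last two steps would indeed work if the first could be closed. But the first step --- the heart of your argument --- has a real gap. The cancellation you invoke, namely that $u_+\cdot\nabla u_+ = \nabla(\tfrac12|u_+|^2) - u_+\times(\nabla\times u_+)$ with $u_+\times(\nabla\times u_+) = u_+\times Du_+ \approx 0$, requires $D\approx \mathrm{Id}$ on the Fourier support of $u_+(t)$, i.e.\ that $u_+(t)$ stays spectrally concentrated in the thin shell $\{1-\delta<|\xi|<1+\delta\}$. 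That is a property of the initial data and of its \emph{heat-flow} evolution (whose Fourier support is exactly preserved), but it is not propagated by the nonlinear Navier--Stokes flow: the nonlinearity immediately spreads the spectral support, and nothing in your continuity argument controls this spreading, so the production term (projection onto negative helicity of $u_+\times Du_+$) is not small at positive times. This is precisely why the paper does not project the full solution onto helical subspaces, but instead perturbs around $\chi_M v$, where $v$ solves the heat equation with data $g$: for $v$ the Beltrami relation $\nabla\times v=Dv$ persists for all time, so $v$ solves Navier--Stokes up to the genuinely small force $-v\times(D-1)v=O(\delta)$, and all nonlinear errors are loaded onto the remainder $h=u-\chi_M v$, which is estimated in supercritical $L^2$/$H^1$-type energy norms (not in $\dot H^{1/2}$), with a Hodge-type cancellation (pairing against $\Delta^{-1}\nabla\nabla\cdot h - h = -\Delta^{-1}\nabla\times\nabla\times h$) to dispose of the gradient force $\nabla(\tfrac12\chi_M^2|v|^2)$, which is \emph{not} small in critical norms.

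The second gap is quantitative. Your claim that the large drift of $u_+$ on $u_-$ ``counts as a bounded quantity in the relevant norm'' because of scale-invariance is false: the critical norm of the data is genuinely large (that is the very point of the theorem), so a smallness/continuity argument for $u_-$ in $\dot H^{1/2}$ has no small parameter with which to absorb the drift terms coming from $u_+$. What actually beats the largeness in the paper is not criticality but decay: the stationary-phase estimate $|v|+|\nabla v|\lesssim Ae^{-\nu t/2}/(1+|x|)$, with spatial decay because $\widehat g$ is (essentially) a measure on a sphere and exponential time decay because that sphere stays away from the origin, combined with the $M^{-1}$ factors produced by derivatives of $\chi_M$. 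You never establish or use this decay, and its absence shows in your bookkeeping: with the decay one has $\|\chi_M g\|_{\dot H^{1/2}}^2\sim M$, hence $c_0\sim M$, not $c_0\sim M^3$ as you claim (that would be the count for a bounded, non-decaying field). Finally, for the record: the paper's proof of Theorem \ref{thm-1} never actually invokes the identity \eqref{criticalenergy}; the helicity structure enters only through the Beltrami property of $g$, and globality follows from the uniform $H^1$ bound on $u=h+\chi_M v$, not from a critical-norm bound.
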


\begin{rem}
An important property of the initial data is that it satisfies $g_- = 0$ in the above
Theorem, which leads one to believe that $u_+$  would dominate in the
evolution of the Navier-Stokes flows. A typical example for $h_0$ in both Theorem \ref{thm-1} and \ref{thm} can be computed by $u_0 = \nabla\times (\nabla\times (\chi_Mg)) = h_0 + \chi_Mg$. We also note that $A$ can be
arbitrarily large (but finite) in the above Theorem. The latter implies
that $g \in L^\infty$ since $\widehat{g} \in L^1$ (see \eqref{3-1}
for details). However, $\widehat{g}$ may not be an $L^p$-function
for any $1<p\leq 3$ and thus our data may not be small in any critical
spaces including $\dot{B}^{-1}_{\infty,\infty}$. Besides, the
integral interval $[1 - \delta, 1 + \delta]$ can be changed to
$\rho[1 - \delta, 1 + \delta]$ for arbitrary positive $\rho$ by
changing $d\lambda$ to $\lambda^2d\lambda$, with various appropriate
modifications. One may even consider the initial data to be a suitable combination
of finitely many $g$'s, say, $g = \sum g_i$, with each $\widehat{g_i}$ supported on
$\{\rho_i(1 - \delta) \leq |\xi| \leq \rho_i(1 + \delta)\}$. Of course we need impose
extra conditions on $\delta$ and $\rho_i$'s. For instance, $\delta \ll \max_i\{\rho_i/\rho_{i + 1}\} \ll 1$ would work.
\end{rem}

\begin{rem}
There are several other constructions of large, finite energy and smooth
solutions for 3D Navier-Stokes equations. Readers may find the following
articles to be informative and relevant: Chemin-Gallagher-Paicu \cite{Chemin}, Hou-Lei-Li
\cite{HLL} and references therein. We shall emphasize that in these
references there may be smallness assumptions imposed on certain
dimensionless norms of unknowns or a part of unknowns. For instance,
in \cite{Chemin}, the $L^2$ norm of $|D_z^{\frac{1}{2}}u_1| +
|D_z^{\frac{1}{2}}u_2| + |D_{xy}^{-1}D_z^{3/2}u_3|$ is small. Here
$\widehat{D_zf}(\xi) = |\xi_3|\widehat{f}(\xi)$ and
$\widehat{D_{xy}^{-1}f}(\xi) = (|\xi_1| +
|\xi_2|)^{-1}\widehat{f}(\xi)$.
\end{rem}

A simple and more typical example of $g$ in the initial data is
\begin{equation}\label{data-2}
g_0(x) = \int_{\mathbb{S}^2}\big(n(\xi)\sin(x\cdot\xi) + |\xi|^{-
1}\xi\times n(\xi)\cos(x\cdot\xi)\big)\beta(\xi)d\sigma_\xi.
\end{equation}
Here $\beta \in C^1(\mathcal{S}^2)$ is a given function which vanishes in a neighbourhood of the singular point of $n(\xi)$. This
turns out to be the steady state Beltrami flow, which has already been observed
in the book of Majda and Bertozzi in \cite{Majda} (see also
\cite{CM}):
$$\nabla\cdot g_0 = 0,\quad \nabla\times g_0 = g_0.$$
We refer to section 3 below for a more detailed discussion. Here we can formulate the following theorem:
\begin{thm}\label{thm}
Let $\beta \in C^1(\mathcal{S}^2)$ and $\chi$ be any standard
cut-off function satisfying \eqref{1-1}-\eqref{1.1}. There exists a
large positive constant $M$ such that the 3D incompressible
Navier-Stokes equations \eqref{NS} with the initial data $u_0 = h_0
+ \chi_M g_0$ are globally well-posed provided that $M \geq M_0$ and
$\|h_0\|_{H^1} \leq M^{-\frac{1}{2}}$.
\end{thm}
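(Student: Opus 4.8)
The plan is to treat the datum $u_0 = h_0 + \chi_M g_0$ as a finite-energy truncation of the exact steady Beltrami field $g_0$ and to run the scheme behind Theorem \ref{thm-1}, now exploiting that the identities $\nabla\cdot g_0 = 0$ and $\nabla\times g_0 = g_0$ hold \emph{exactly} rather than approximately. First I would record the three structural facts this buys us. Since $\nabla\times g_0 = g_0$, we get $-\Delta g_0 = \nabla\times(\nabla\times g_0) = g_0$, so $g_0$ is a genuine eigenfunction of $-\Delta$ at frequency $|\xi|=1$; the Beltrami identity $g_0\cdot\nabla g_0 = \nabla(\tfrac12|g_0|^2)$ holds, so that $e^{-\nu t}g_0$ solves \eqref{NS} exactly with pressure $-\tfrac12 e^{-2\nu t}|g_0|^2$; and, because $\widehat{g_0}$ is a $C^1$ density on the unit sphere $\mathbb{S}^2$ (using $\beta\in C^1(\mathbb{S}^2)$ in \eqref{data-2}), stationary phase gives the decay $|g_0(x)| + |\nabla g_0(x)|\lesssim (1+|x|)^{-1}$. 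This last fact is what makes the truncation harmless.

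Next I would introduce the approximate solution $\bar u(t,x) = e^{-\nu t}\chi_M(x) g_0(x)$ and compute its Navier--Stokes residual $R := \partial_t\bar u - \nu\Delta\bar u + \bar u\cdot\nabla\bar u + \nabla\bar p$ with $\bar p = \tfrac12 e^{-2\nu t}\chi_M^2|g_0|^2$. Using $-\Delta g_0 = g_0$ and the exact Beltrami identity, every surviving term in $R$ carries at least one factor of $\nabla\chi_M$ or $\Delta\chi_M$ and is thus supported in the annulus $\{M\le|x|\le 2M\}$; combining $|\nabla^k\chi_M|\lesssim M^{-k}$ from \eqref{1-1} with $|g_0|,|\nabla g_0|\lesssim|x|^{-1}\sim M^{-1}$ on that annulus and integrating over its volume $\sim M^3$ yields $\|R\|_{L^2}\lesssim M^{-1/2}$ (with a correspondingly small critical-dual bound). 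This is exactly where the sphere case is cleaner than the shell case of Theorem \ref{thm-1}: there the field is only approximately Beltrami and $R$ inherits an extra $O(\delta)$ term, which is what forced $M\ge\delta^{-1/2}$; here that term is absent, so only $M\ge M_0$ is needed.

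The genuine solution is then sought as $u = \bar u + w$, where $w$ solves the perturbed system with forcing $-R$ and datum $w(0)=h_0$, whose $H^1$-norm is $\le M^{-1/2}$; local well-posedness in $H^1$ is standard. The difficulty is global control, since $\|\nabla\bar u\|_{L^\infty}\sim\|\nabla g_0\|_{L^\infty}=O(1)$ near the origin, so the linearized stretching $w\cdot\nabla\bar u$ is \emph{not} small and Gr\"onwall alone only gives exponential-in-time bounds. Here Theorem \ref{thm-coercive} is decisive. Decomposing $u=u_+ + u_-$ as in \eqref{decom}, the relation $\nabla\times g_0 = g_0 = Dg_0$ places $g_0$ entirely in the positive-helicity sector, so $P_-g_0=0$ and the large part of the datum is carried by $u_{0+}$, while $u_{0-}=(\chi_M g_0)_- + h_{0-}$ is small: the piece $(\chi_M g_0)_- = [P_-,\chi_M]g_0$ is a commutator of the zero-order helical projection with a cutoff at scale $M$ and tends to $0$ as $M\to\infty$, and $\|h_{0-}\|_{\dot H^{1/2}}\le\|h_0\|_{H^1}\le M^{-1/2}$. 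Hence $c_0=\tfrac12(\|u_{0+}\|_{\dot H^{1/2}}^2-\|u_{0-}\|_{\dot H^{1/2}}^2)>0$ and \eqref{criticalenergy} is coercive in the sense explained after Theorem \ref{thm-coercive}.

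It remains to turn the coercive identity into a uniform critical bound and then into global regularity. I would set up a continuity argument on the maximal interval $[0,T^*)$: under the bootstrap hypothesis that $E_c(u_-(t))$ stays below a fixed small multiple of its initial value, \eqref{criticalenergy} forces $E_c(u_+(t))=E_c(u_-(t))+c_0$, and hence the full $E_c(u)=E_c(u_+)+E_c(u_-)$, to be bounded uniformly in $t$; this controls $\sup_t\|u\|_{\dot H^{1/2}}$ together with the accumulated dissipation $\nu\int_0^t\|D^{1/2}\nabla u\|_{L^2}^2\,ds$, which rules out blow-up by the critical $\dot H^{1/2}$ regularity criterion and gives $T^*=\infty$. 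The main obstacle is closing the bootstrap, i.e. proving that $u_-$ really stays small for all time. This requires estimating the equation for $u_-$, whose forcing is the helical projection of the nonlinearity and of $R$, and showing it is dominated by the bootstrapped critical energy together with the small quantities coming from $\|h_0\|_{H^1}\le M^{-1/2}$ and the cutoff commutator. Verifying that the dominance of $u_+$ is \emph{propagated} rather than merely assumed at $t=0$ is the crux of the argument, and the one place where the helical decomposition and the coercivity of \eqref{criticalenergy} must be used quantitatively.
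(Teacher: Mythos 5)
Your first half is essentially the paper's own setup: perturbing around $\bar u = e^{-\nu t}\chi_M g_0$ (the paper writes $u = h + \chi_M v$ with $v = e^{-\nu t}g_0$ the heat flow, and the force $f$ in \eqref{h-eqn} plays the role of your $-R$), using $-\Delta g_0 = g_0$ and the Beltrami identity so that the residual is supported in the annulus $\{M \le |x| \le 2M\}$, and using the decay $|g_0| + |\nabla g_0| \lesssim (1+|x|)^{-1}$ to get smallness in $M$. But the second half contains a genuine gap, and it stems from a mistaken premise. You claim the linearized stretching $w\cdot\nabla\bar u$ has an $O(1)$ coefficient, so that Gr\"onwall only yields exponential-in-time bounds, and that one must therefore invoke Theorem \ref{thm-coercive}. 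This overlooks exactly what $-\Delta g_0 = g_0$ buys: $\bar u$ decays like $e^{-\nu t}$ uniformly, so $\|\nabla\bar u(t,\cdot)\|_{L^\infty} \lesssim e^{-\nu t}$ is \emph{integrable in time} and the Gr\"onwall coefficient is summable. That is precisely how the paper closes the argument: every linear-in-$h$ term in \eqref{3-7}, \eqref{3-10}, \eqref{3-11} carries a factor $e^{-t/2}$ or $e^{-t}$, so the continuation argument under the bootstrap hypothesis $C\|h\|_{L^3}\le \frac{3}{8}$ gives the uniform bound \eqref{3-13}, hence $\|h\|_{H^1}\lesssim M^{-1/2}$ for all time and $u \in L^\infty_t H^1$, which suffices for global regularity. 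Notably, the paper's proofs of Theorems \ref{thm-1} and \ref{thm} never invoke the identity \eqref{criticalenergy} in the estimates; Theorem \ref{thm-coercive} motivates the choice of data ($g_{0-}=0$) but does no quantitative work.

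The route you substitute instead does not close, as you yourself concede. The identity \eqref{criticalenergy} pins down only the \emph{difference} $E_c(u_+) - E_c(u_-) = c_0$; it bounds nothing unless you separately prove that $E_c(u_-)$ stays small, i.e. that the dominance of $u_+$ propagates in time. That step would require nonlinear estimates for the $u_-$ equation at critical regularity in the presence of a large $u_+$: the forcing of that equation contains the negative-helicity projection of $u_+\cdot\nabla u_+$, which is quadratic in the large part of the solution and has no reason to be small, and neither your proposal nor the paper supplies such estimates --- the paper avoids them entirely by the direct perturbation argument. Declaring this step ``the crux'' does not discharge it; as written the proof is incomplete at its decisive point. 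Two further loose ends: your perturbation $w$ is not divergence-free ($\nabla\cdot w = -e^{-\nu t}\,g_0\cdot\nabla\chi_M$), which affects the pressure and produces exactly the terms the paper must handle via $\Delta^{-1}\nabla\nabla\cdot h - h = \Delta^{-1}\nabla\times\nabla\times h$; and your claimed smallness of the commutator $(\chi_M g_0)_- = [P_-,\chi_M]g_0$ in $\dot H^{1/2}$ is plausible but asserted without a rate, even though the whole bootstrap would hinge on it quantitatively.
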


The proof of Theorem \ref{thm} is exactly the same as that for
Theorem \ref{thm-1}. Our proof of Theorem \ref{thm-1} is
elementary and it is based on a perturbation argument along with a standard cut-off
technique. A key point is a decay estimate in the spatial directions of
such family of initial data. Let us briefly explain the main idea involved.
We let $v$ be obtained from the heat flow with initial data $g$ or $g_0$. Write the solution as $u = h +
v\chi_M$. Then we try to solve for $h$. Note that $h$ is not divergence-free (see equation
\eqref{h-eqn}). Then main difficulty in solving for $h$ is that the "force term" may not be small.
Indeed, it is easy to check that one of the forcing terms in
$h$-equation (see \eqref{h-eqn} and \eqref{f}) is
$\nabla(\frac{1}{2}\chi_M^2|v|^2)$, which is not small in
$L^2_t(\dot{H}^{-\frac{1}{2}})$ (or $L^2_t(L^3)$ norm of
$\frac{1}{2}\chi_M^2|v|^2$). Thus the standard parabolic estimate
doesn't give an $L^2_t(\dot{H}^{\frac{3}{2}})\cap
L^\infty_t(\dot{H}^{\frac{1}{2}})$ estimate of the perturbation $h$
from $v\chi_M$. In addition, $\|\nabla\cdot
h\|_{L^\infty_t(\dot{H}^{-\frac{1}{2}})}$ (and thus $\|
h\|_{L^\infty_t(\dot{H}^{\frac{1}{2}})}$) may not be small.
Details of these will be discussed in section 4.

The remaining part of the paper is organized as follows. We will
first prove Theorem \ref{thm-coercive} by the virtue of the
structure of the helicity in section 2. In section 3 we study the
decay properties of the initial data given in \eqref{data-1} and
\eqref{data-2}. Then we prove Theorem \ref{thm-1} and Theorem
\ref{thm} in section 4.

\section{Structure of Helicity}

It is well-known that the helicity $\int u\cdot\omega dx$ is
conserved in time for 3D incompressible Euler equations. However,
due to the presence of dissipation, the helicity is not conserved
for 3D incompressible Navier-Stokes equations. It is not clear how to
make use of such a quantity without positivity of its integrand even
in the case of Euler equations.

In this section we will explore a structure of the helicity which is
inherent by smooth solutions to the 3D incompressible Euler or
Navier-Stokes equations. Our proof of Theorem \ref{thm-coercive} is
based on a strongly orthogonal decomposition of the velocity vector
which is stated in Proposition \ref{prop-2}. Similar conclusions
were studied earlier by P. Constantin  and A. Majda for
incompressible Euler equations in \cite{CM}.

Let $u$ be a divergence-free vector field. We make the following
decomposition:
\begin{equation}\label{decom}
u = u_+ + u_-,
\end{equation}
where
\begin{equation}\nonumber
u_+ = \frac{1}{2}\big(u + D^{-1}\nabla\times u\big)
\end{equation}
and
\begin{equation}\nonumber
u_- = \frac{1}{2}\big(u - D^{-1}\nabla\times u\big).
\end{equation}
We have the following proposition:
\begin{prop}\label{prop-1}
Let $u \in H^1(\mathbb{R}^3)$ be a 3D divergence-free vector field
and be decomposed into $u_+$ and $u_-$ as in \eqref{decom}. Then the
following identities hold:
\begin{equation}\nonumber
\nabla\times u_+ = Du_+,\quad \nabla\times u_- = - D u_-.
\end{equation}
\end{prop}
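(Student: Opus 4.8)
The plan is to reduce the entire statement to one algebraic fact: that the curl operator, applied twice, coincides with $D^2 = -\Delta$ on divergence-free fields. First I would note that $\nabla\times$, $D$ and $D^{-1}$ are all Fourier multipliers, with respective symbols $i\xi\times(\cdot)$, $|\xi|$ and $|\xi|^{-1}$; in particular they commute with one another, and $D^{-1}\nabla\times$ is a multiplier of order zero, hence bounded on $L^2$, so the fields $u_\pm$ in \eqref{decom} are well defined and lie in $H^1(\mathbb{R}^3)$ whenever $u$ does. I would also record, as an aside that will be convenient later, that $u_\pm$ are themselves divergence-free: since $\nabla\cdot(\nabla\times u)=0$, one gets $\nabla\cdot u_\pm = \tfrac12\nabla\cdot u = 0$.

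The crucial step is the identity
\begin{equation}\nonumber
\nabla\times(\nabla\times u) = D^2 u \qquad (\nabla\cdot u = 0).
\end{equation}
In physical space this is the classical vector-calculus identity $\nabla\times(\nabla\times u)=\nabla(\nabla\cdot u)-\Delta u$, which reduces to $-\Delta u = D^2 u$ precisely because $\nabla\cdot u = 0$. Equivalently, on the Fourier side one computes $\widehat{\nabla\times(\nabla\times u)}(\xi)=i\xi\times\big(i\xi\times\widehat u(\xi)\big)=-\big[(\xi\cdot\widehat u)\xi-|\xi|^2\widehat u\big]=|\xi|^2\widehat u(\xi)$, where the term $(\xi\cdot\widehat u)\xi$ drops out by the divergence-free condition $\xi\cdot\widehat u=0$. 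This is the one and only place where the hypothesis $\nabla\cdot u=0$ is used.

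With this in hand the proposition follows by direct computation. Applying the curl to $u_+$ and invoking the displayed identity together with $D^{-1}D^2 = D$,
\begin{equation}\nonumber
\nabla\times u_+ = \tfrac12\big(\nabla\times u + D^{-1}\nabla\times(\nabla\times u)\big) = \tfrac12\big(\nabla\times u + D u\big),
\end{equation}
while, since $D$ commutes with $D^{-1}\nabla\times$ and $DD^{-1}=\mathrm{Id}$,
\begin{equation}\nonumber
D u_+ = \tfrac12\big(Du + DD^{-1}\nabla\times u\big) = \tfrac12\big(Du + \nabla\times u\big).
\end{equation}
Hence $\nabla\times u_+ = Du_+$. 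Replacing the $+$ sign by $-$ throughout gives $\nabla\times u_- = \tfrac12(\nabla\times u - Du) = -Du_-$, which completes the argument.

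The computation is essentially routine once the identity $\nabla\times(\nabla\times u)=D^2u$ is isolated, so I do not expect a genuine obstacle. The only point requiring a little care is the meaning of $D^{-1}$ at low frequencies and the legitimacy of these manipulations at $H^1$ regularity; this is handled by observing that $D^{-1}\nabla\times$ is a bona fide order-zero operator bounded on $L^2$, and that $\nabla\times(\nabla\times u)=D^2u$ holds as an $L^2$ identity for $u\in H^2$ and extends to $u\in H^1$ by density together with the continuity of all operators involved.
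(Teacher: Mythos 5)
Your proposal is correct and follows essentially the same route as the paper: both apply the curl to $u_\pm = \tfrac12\big(u \pm D^{-1}\nabla\times u\big)$ and use the identity $\nabla\times(\nabla\times u) = -\Delta u = D^2u$ for divergence-free $u$ to conclude $\nabla\times u_\pm = \pm Du_\pm$. Your added remarks on Fourier multipliers, $L^2$-boundedness of $D^{-1}\nabla\times$, and the density argument at $H^1$ regularity are sound elaborations of details the paper leaves implicit, not a different method.
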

\begin{proof}
We only need to show the first identity. The second one is similar.
The proof is straightforward. In fact, due to the divergence-free
property of $u$, it is clear that
\begin{eqnarray}\nonumber
&&\nabla\times u_+ = \frac{1}{2}\big(\nabla\times u +
  D^{-1}\nabla\times\nabla\times u\big)\\\nonumber
&&= \frac{1}{2}\big(\nabla\times u -
  D^{-1}\Delta  u\big)\\\nonumber
&&= \frac{1}{2}D\big(D^{-1}\nabla\times u +
  u\big)\\\nonumber
&&= Du_+.
\end{eqnarray}
\end{proof}

The following proposition shows that $u_+$ and $u_-$ are strongly
orthogonal to each other.
\begin{prop}\label{prop-2}
Let $m, k \geq 0$ be any integers and $u \in C^m([0, T),
H^k(\mathbb{R}^3))$. Suppose that for each $t \in [0, T)$, $u(t,
\cdot)$ is divergence-free. Decompose $u(t, \cdot)$ into $u_+(t,
\cdot)$ and $u_-(t, \cdot)$ as in \eqref{decom}. Then for all
integers $m_1, m_2$ and $k_1, k_2$ with $m_1 + m_2 \leq m$ and $k_1
+ k_2 \leq k$, we have
\begin{equation}\nonumber
\int D^{m_1}\partial_t^{k_1}u_+\cdot D^{m_2}\partial_t^{k_2}u_-dx
\equiv 0.
\end{equation}
\end{prop}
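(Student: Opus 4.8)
The plan is to pass to the Fourier transform in $x$, where the decomposition \myref{decom} becomes a pointwise-in-frequency spectral projection, and to prove the orthogonality frequency by frequency. First I would record that the operators $P_{\pm}:=\tfrac12\big(I\pm D^{-1}\nabla\times\big)$, which send $u$ to $u_{\pm}$, are Fourier multipliers whose symbols depend only on the direction $\hat\xi=\xi/|\xi|$; in particular they are time-independent and commute with $\partial_t$ and with every power $D^{s}=(-\Delta)^{s/2}$. Differentiating $\nabla\cdot u(t,\cdot)=0$ in $t$ shows that each $\partial_t^{j}u$ is again divergence-free, and the hypothesis $u\in C^m([0,T),H^k(\mathbb{R}^3))$ guarantees that all derivatives occurring below are legitimate and all frequency integrals absolutely convergent. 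Writing $w:=D^{m_1}\partial_t^{k_1}u$ and $z:=D^{m_2}\partial_t^{k_2}u$, both of which are divergence-free, the integrand becomes $P_+w\cdot P_-z$, so the claim reduces to showing
\begin{equation}\nonumber
\int_{\mathbb{R}^3}(P_+w)\cdot(P_-z)\,dx=0
\end{equation}
for an arbitrary pair of divergence-free fields $w,z$.

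Next I would invoke Plancherel's identity, $\int(P_+w)\cdot(P_-z)\,dx=\int\widehat{P_+w}(\xi)\cdot\overline{\widehat{P_-z}(\xi)}\,d\xi$, and read off the symbols: for $\xi\neq0$,
\begin{equation}\nonumber
\widehat{P_\pm v}(\xi)=\tfrac12\big(\hat v(\xi)\pm i\,\hat\xi\times\hat v(\xi)\big).
\end{equation}
It then suffices to establish the pointwise identity $\widehat{P_+w}(\xi)\cdot\overline{\widehat{P_-z}(\xi)}=0$ for almost every $\xi$. The conceptual reason, which I would state first, is that the divergence-free condition places $\hat w(\xi),\hat z(\xi)$ in the complex plane $\xi^{\perp}=\{a\in\mathbb{C}^3:\hat\xi\cdot a=0\}$, on which the real antisymmetric map $a\mapsto\hat\xi\times a$ is skew-Hermitian; hence $T:=i\,\hat\xi\times(\cdot)$ is \emph{Hermitian} with eigenvalues $\pm1$, and $P_{\pm}$ are precisely its spectral projections onto the $\pm1$-eigenspaces (this is the Fourier-space form of Proposition \ref{prop-1}, $\nabla\times u_{\pm}=\pm D u_{\pm}$). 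Eigenvectors of a Hermitian operator for distinct eigenvalues are orthogonal, so $\widehat{P_+w}(\xi)$ and $\widehat{P_-z}(\xi)$ are Hermitian-orthogonal and their pairing vanishes.

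The explicit computation I would actually write down is the following verification of the same pointwise identity. Setting $a=\hat w(\xi)$ and $b=\hat z(\xi)$ and using $\overline{\widehat{P_-z}}=\tfrac12(\bar b+i\,\hat\xi\times\bar b)$, one expands
\begin{equation}\nonumber
\widehat{P_+w}\cdot\overline{\widehat{P_-z}}=\tfrac14\big(a+i\,\hat\xi\times a\big)\cdot\big(\bar b+i\,\hat\xi\times\bar b\big).
\end{equation}
The Lagrange identity gives $(\hat\xi\times a)\cdot(\hat\xi\times\bar b)=(\hat\xi\cdot\hat\xi)(a\cdot\bar b)-(\hat\xi\cdot\bar b)(\hat\xi\cdot a)=a\cdot\bar b$, using $\hat\xi\cdot a=\hat\xi\cdot\bar b=0$, so the $i^2$-term cancels $a\cdot\bar b$; and the scalar triple product is antisymmetric, $a\cdot(\hat\xi\times\bar b)=-(\hat\xi\times a)\cdot\bar b$, so the two terms linear in $i$ cancel. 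Hence the expression is identically $0$, and integrating this vanishing in $\xi$ closes the argument.

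I do not anticipate a genuine obstacle: the statement is really the assertion that \myref{decom} is the helical (spectral) decomposition attached to the self-adjoint curl operator on divergence-free fields, and once one is on the Fourier side the entire content is the pointwise eigenspace orthogonality above. The only matters needing care are bookkeeping ones: confirming that $P_\pm$ commute with $D^{m_i}$ and $\partial_t^{k_i}$, that time differentiation preserves the divergence-free condition, and that the regularity assumptions make the frequency integral absolutely convergent, so that pointwise vanishing of the integrand legitimately yields a vanishing integral. None of these is deep.
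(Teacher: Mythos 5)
Your proof is correct, but it runs along a genuinely different track from the paper's. The paper stays in physical space and makes a duality argument out of Proposition \ref{prop-1}: writing $u_+ = D^{-1}\nabla\times u_+$, moving the curl across the $L^2$ pairing (curl is formally self-adjoint), using $\nabla\times u_- = -Du_-$, and then moving one power of $D$ back across, the integral $I$ is shown to equal $-I$, hence $I\equiv 0$ --- a three-line reflection argument that never mentions the Fourier transform explicitly. You instead pass to frequency space via Plancherel and prove the stronger \emph{pointwise} statement that $\widehat{P_+w}(\xi)\cdot\overline{\widehat{P_-z}(\xi)}=0$ for a.e.\ $\xi$, by identifying $P_\pm$ as the spectral projections of the Hermitian map $i\,\hat\xi\times(\cdot)$ on the plane $\hat\xi^{\perp}$ and verifying the eigenspace orthogonality by a Lagrange-identity computation (which I checked: the $i^2$-term cancels $a\cdot\bar b$ and the triple-product terms cancel each other, exactly as you say). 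The two proofs hinge on the same structural fact --- $u_\pm$ are eigenfunctions of $D^{-1}\nabla\times$ with distinct eigenvalues $\pm1$ --- but the paper executes it as an integration-by-parts identity while you execute it frequency by frequency. What the paper's route buys is brevity and the fact that it quotes Proposition \ref{prop-1} as a black box; what your route buys is a sharper conclusion (orthogonality holds pointwise in $\xi$, so the result immediately extends from $D^{m_i}$ and $\partial_t^{k_i}$ to arbitrary real radial Fourier multipliers applied to each factor) and a self-contained justification of the formal operator manipulations, since for the nonlocal operators $D^{\pm 1}$ the paper's ``integration by parts'' is ultimately Plancherel anyway. One cosmetic remark: the paper's stated index constraints ($m_1+m_2\leq m$ for the spatial operators when $m$ measures time regularity, and $k_1+k_2\leq k$ for the time derivatives when $k$ measures spatial regularity) appear to have the roles of $m$ and $k$ swapped; your proof, like the paper's, is insensitive to this bookkeeping, but if you want your absolute-convergence remark to be airtight you should read the hypotheses with the indices interchanged.
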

\begin{proof}
Without loss of generality, we may assume that $m_2 < m$. By
Proposition \ref{prop-1}, one has
\begin{equation}\nonumber
u_+ = D^{-1}\nabla\times u_+.
\end{equation}
Consequently, one has
\begin{eqnarray}\nonumber
&&\int D^{m_1}\partial_t^{k_1}u_+\cdot
  D^{m_2}\partial_t^{k_2}u_-dx\\\nonumber
&&= \int D^{m_1}\partial_t^{k_1}D^{-1}\nabla\times u_+\cdot
  D^{m_2}\partial_t^{k_2}u_-dx\\\nonumber
&&= \int D^{m_1}\partial_t^{k_1}D^{-1} u_+\cdot
  D^{m_2}\partial_t^{k_2}\nabla\times u_-dx\\\nonumber
&&= - \int D^{m_1}\partial_t^{k_1}D^{-1} u_+\cdot
  D^{m_2 + 1}\partial_t^{k_2} u_-dx\\\nonumber
&&= - \int D^{m_1}\partial_t^{k_1} u_+\cdot
  D^{m_2}\partial_t^{k_2} u_-dx,
\end{eqnarray}
which implies the result in the proposition.
\end{proof}

We are ready now to prove our structural theorem, i.e. Theorem
\ref{thm-coercive}, for the helicity of solutions to the
incompressible Navier-Stokes equations.
\begin{proof}
We first notice that by integration by parts and the divergence-free property of $u$, there holds
\begin{eqnarray}\nonumber
\frac{d}{dt}\int u\cdot\omega dx &=& \int u_t\cdot\omega dx + \int (\nabla\times u_t) \cdot u dx\\\nonumber
&=& 2\int u_t\cdot\omega dx,\quad {\rm for}\ \omega = \nabla\times u.
\end{eqnarray}
Consequently,  there holds the following identity for helicity of the
incompressible Navier-Stokes equations \eqref{NS}:
\begin{eqnarray}\label{4-2}
\frac{d}{dt}\int u\cdot\omega dx = 2\nu\int \Delta u\cdot\omega dx.
\end{eqnarray}

Applying the decomposition in \eqref{decom} and using Proposition
\ref{prop-1}, we obtain that
\begin{eqnarray}\nonumber
\int u\cdot\omega dx &=& \frac{1}{4}\int (u_+ +
  u_-)\cdot(\nabla\times u_+ + \nabla\times u_-)dx\\\nonumber
&=& \frac{1}{4}\int (u_+ +
  u_-)\cdot(Du_+ - D u_-)dx.
\end{eqnarray}
By Proposition \ref{prop-2}, we further deduce that
\begin{eqnarray}\label{4-3}
\int u\cdot\omega dx &=& \frac{1}{4}\int (u_+\cdot Du_+ - u_-\cdot D
  u_-)dx\\\nonumber
&=& \frac{1}{4}\big(\|D^{\frac{1}{2}}u_+\|_{L^2}^2 -
  \|D^{\frac{1}{2}}u_-\|_{L^2}^2\big).
\end{eqnarray}
Similarly, we have
\begin{eqnarray}\label{4-4}
\int \Delta u\cdot\omega dx = -
\frac{1}{4}\big(\|D^{\frac{3}{2}}u_+\|_{L^2}^2 -
  \|D^{\frac{3}{2}}u_-\|_{L^2}^2\big).
\end{eqnarray}
Plugging \eqref{4-3} and \eqref{4-4} into \eqref{4-2}, we arrive at
\begin{eqnarray}\nonumber
&&\frac{d}{dt}\big(\frac{1}{2}\|D^{\frac{1}{2}}u_+(t)\|_{L^2}^2 +
\nu\int_0^t\|D^{\frac{3}{2}}u_+(s)\|_{L^2}^2ds\big)\\\nonumber &&=
\frac{d}{dt}\big(\frac{1}{2}\|D^{\frac{1}{2}}u_-(t)\|_{L^2}^2 + \nu
\int_0^t\|D^{\frac{3}{2}}u_-(s)\|_{L^2}^2ds\big).
\end{eqnarray}
Integrating the above differential inequality with respect to time,
one can complete the proof of the theorem.
\end{proof}

\section{Decay Properties of Data}

First of all, let us rewrite the data in \eqref{data-1} as
\begin{eqnarray}\label{3-1}
g(x) &=& \frac{1}{2}\int_{1 - \delta < |\xi| < 1 + \delta}\big(-
  in(\xi) + |\xi|^{- 1}\xi\times n(\xi)\big)
  \alpha(\xi)e^{ix\cdot\xi}d\xi\\\nonumber
&&+\ \frac{1}{2}\int_{1 - \delta < |\xi| <  1
  + \delta}\big(in(\xi) + |\xi|^{- 1}\xi\times n(\xi)\big)
  \alpha(\xi)e^{- ix\cdot\xi}d\xi\\\nonumber
&=& \frac{1}{2}\mathcal{F}^{-1}\big[\big(- in(\xi) + |\xi|^{-
  1}\xi\times n(\xi)\big)\alpha(\xi)\big](x)\\\nonumber
&& +\ \frac{1}{2}\mathcal{F}^{-1}\big[\big(in(\xi) + |\xi|^{-
  1}\xi\times n(\xi)\big)\alpha(\xi)\big](- x).
\end{eqnarray}
It is easy to check that
\begin{equation}\label{3-3}
\nabla\cdot g = 0,\quad \nabla \times g = Dg.
\end{equation}
Hence, one has
\begin{equation}\label{3-5}
g_- = 0,\quad g = g_+.
\end{equation}
Moreover, there holds
\begin{equation}\label{3-15}
\|g\|_{L^\infty} + \|\nabla g\|_{L^\infty} \lesssim
\|\widehat{g}\|_{L^1} + \|\widehat{\nabla g}\|_{L^1} \lesssim A.
\end{equation}

Next, we study the spatial decay properties of $g$ given in
\eqref{data-1}. For each $x$ with $|x| \neq 0$, let $B(x)$ be an
orthogonal matrix such that
\begin{equation}\nonumber
x = B\overline{x},\quad \overline{x} = \begin{pmatrix}0 \\
0 \\ |x|\end{pmatrix}.
\end{equation}
We use the sphere coordinate to parameterize $y$ as follows:
\begin{equation}\nonumber
B^T y = \widetilde{n}(y) = \begin{pmatrix}\sin\phi\cos\theta \\
\sin\phi\sin\theta \\ \cos\phi\end{pmatrix}, \quad 0 \leq \phi \leq
\pi, - \pi \leq \theta \leq \pi.
\end{equation}
We compute that
\begin{eqnarray}\nonumber
g(x) &=& \int\lambda^2d\lambda\int_{\mathbb{S}^2}\big[n(y)\sin<
  \widetilde{n}(y), \overline{x}>\\\nonumber
&&\quad +\ y \times n(y) \cos<
  \widetilde{n}(y), \overline{x}>\big] a(\lambda y) d\sigma_y\\\nonumber
&=& \int_0^\pi\int_0^{2\pi}\big[n(y) \sin(\kappa|x|\cos\phi)
  + y\times n(y)\cos(|x|\cos\phi)\big]\sin\phi d\theta a(\lambda y)
  d\phi\\\nonumber
&=& \frac{1}{|x|}\int_0^{2\pi} d\theta
  \int_0^\pi a(\lambda y)\big[n(y) d\cos(|x|\cos\phi) - y \times
  n(y) d\sin(|x|\cos\phi)\big]  \\\nonumber
&=& \frac{1}{|x|}\int_0^{2\pi}   a(\lambda y)
  \big[n(y) \cos(|x|\cos\phi) - y \times
  n(y) \sin(|x|\cos\phi)\big]\big|_{\phi = 0}^{\phi = \pi}d\theta\\\nonumber
&&\quad -\ \frac{1}{|x|}\int_0^{2\pi} d\theta
  \int_0^\pi\big[\cos(|x|\cos\phi) d\big(n(y)a(\lambda y)\big)\\\nonumber
&&\quad\quad\quad\quad -\ \sin(|x|\cos\phi) d\big( a(\lambda y)
y\times
  n(y)\big)\big].
\end{eqnarray}
Since, by \eqref{3-15}, $|\partial_\phi[a(\lambda y)y\times n(y)]| +
|\partial_\phi[n(y)a(\lambda y)]| \lesssim A$,
 one has
\begin{equation}\nonumber
|g(x)| \leq \frac{A}{|x|}.
\end{equation}
A similar bound can also be verified for $\nabla g$. Hence, we have
\begin{equation}\label{2.2}
|g(x)| + |\nabla g(x)| \lesssim \frac{A}{1 + |x|}.
\end{equation}

A similar calculation as above shows that $g_0$ given in
\eqref{data-2} satisfies
\begin{equation}\nonumber
\nabla\cdot g_0 = 0,\quad \nabla\times g_0 = g_0,\quad |g_0(x)| +
|\nabla g_0(x)| \lesssim \frac{1}{1 + |x|}.
\end{equation}

\section{Constructing Solutions by Cut-off and Perturbation}

In this section we construct the global smooth solutions to the 3D
Navier-Stokes equations with finite energy using the standard
cut-off and perturbation arguments.

First of all, let $v(t, x)$ be the solution of the heat equation
\begin{equation}\label{HE}
v_t = \nu\Delta v,\quad v(0, x) =v_0.
\end{equation}
If $v_0 = g$ which is given in \eqref{data-1}, then by \eqref{3-3}
(it is preserved by the heat flow in \eqref{HE}), one has
\begin{eqnarray}\label{NS-2}
&&v_t + v\cdot\nabla v + \nabla (- \frac{1}{2}|v|^2) -
  \nu\Delta v\\\nonumber
&&= - v\times (\nabla\times v) = - v\times Dv\\\nonumber &&= - v
\times (D - 1)v.
\end{eqnarray}
In this case, $v$ is a solution of the Navier-Stokes equations with
a forcing term $- v \times (D - 1)v$. Clearly, one has the following
estimate:
\begin{equation}\label{3-4}
|v(x)| + |\nabla v(x)| \lesssim \frac{Ae^{-\nu t/2}}{1 +
|x|}.
\end{equation}
Indeed, choosing $\beta \in C_0^\infty$ so that $\beta = 1$ on the support of $\alpha$ and $\beta = 0$ if $|\xi| \geq 1 + 2\delta$ and $|\xi| \leq 1 - 2\delta$, one has $v(t, x) = e^{-\nu t/2}\mathcal{F}^{-1}\big(e^{-\nu \sqrt{|\xi|^2 - 1/2}^2t}\beta(\xi)\big)\ast g(x)$. Note that $1/2 \leq |\xi|^2 - 1/2 \leq (1 + 2\delta)^2 - 1/2$ in the kernel $\mathcal{F}^{-1}\big(e^{-\nu \sqrt{|\xi|^2 - 1/2}^2t}\beta(\xi)\big)$. Then one can easily verify \eqref{3-4} by using \eqref{2.2}.

If $v_0 = g_0$ which is given in \eqref{data-2}, then the forcing
term in \eqref{NS-2} vanishes and the estimate \eqref{3-4} still
holds. So the proof of Theorem \ref{thm} can be carried out in the
same way as that of Theorem \ref{thm-1}. Below we will only present
the proof for Theorem \ref{thm-1}.

Suppose that $u$ is the unique local smooth solution of the
Navier-Stokes equations with initial data $u(0, x) = h_0 + \chi_M
g(x)$. Here $\|h_0\|_{H^1} \leq  M^{-\frac{1}{2}}$. The associated
pressure is $p = - \Delta^{-1}\nabla\cdot[\nabla\cdot(u\otimes u)]$.
To show that $u(t, x)$ is a global smooth solution, it is sufficient
to prove an \textit{a priori} estimate for $\|u(t, \cdot)\|_{H^1}$
for all $t > 0$. Define
\begin{equation}\nonumber
h = u - \chi_M v.
\end{equation}
It is easy to see that $h$ is governed by
\begin{equation}\label{h-eqn}
\begin{cases}
h_t + h\cdot\nabla h + \nabla p = \Delta
h + f,\\[-4mm]\\
\nabla\cdot h = - v\cdot\nabla\chi_M,\quad h(0, x) = h_0(x),
\end{cases}
\end{equation}
where $f$ is given by
\begin{eqnarray}\nonumber
&&f = - \chi_M(v_t - \Delta v) -
  h\cdot\nabla(\chi_M v) - \chi_M v\cdot\nabla h\\\nonumber
&&\quad +\  v\Delta\chi_M + 2(\nabla\chi_M\cdot\nabla) v  -
  \chi_M(v\cdot\nabla\chi_M)v - \chi_M^2v\cdot\nabla
  v\\\nonumber
&&=  v\Delta\chi_M + 2(\nabla\chi_M\cdot\nabla) v  -
  \chi_M(v\cdot\nabla\chi_M)v - \chi_M^2v\cdot\nabla
  v\\\nonumber
&&\quad -\ h\cdot\nabla(\chi_M v) - \chi_M v\cdot\nabla h.
\end{eqnarray}
Here and in what follows we will set $\nu$ to be 1.

Taking the $L^2$ inner product of \eqref{h-eqn} with $h$, we have
\begin{eqnarray}\nonumber
\frac{1}{2}\frac{d}{dt}\|h\|_{L^2}^2 + \|\nabla h\|_{L^2}^2 =
\frac{1}{2}\int|h|^2\nabla\cdot h dx + \int(p\nabla\cdot h + fh)dx.
\end{eqnarray}
Now let us use the expressions for $p$ and $f$ to rewrite that
\begin{eqnarray}\nonumber
&&\int(p\nabla\cdot h + fh)dx\\\nonumber &&=
  \int\Big(-\Delta^{-1}\nabla\cdot\big[h\cdot\nabla h - v\Delta\chi_M - 2(\nabla\chi_M\cdot\nabla) v
  + \chi_M v\cdot\nabla h + h\cdot\nabla(\chi_M v)\\\nonumber
&&\quad +\ \chi_M^2 v\cdot\nabla v + (\chi_M v) v\cdot\nabla
  \chi_M\big]\nabla\cdot h + \big[ v\Delta\chi_M + 2(\nabla\chi_M\cdot\nabla) v\\\nonumber
&&\quad -\ \chi_M(v\cdot\nabla\chi_M)v - \chi_M^2v\cdot\nabla
  v - h\cdot\nabla(\chi_M v) - \chi_M v\cdot\nabla
  h\big]h\Big)dx\\\nonumber
&&=  \int(h\cdot\nabla h)\Delta^{-1}\nabla\nabla\cdot hdx +
  \int(\chi_M^2v\cdot\nabla v)(\Delta^{-1}\nabla\nabla\cdot h -
  h)dx\\\nonumber
&&\quad +\ \int\Big(\big[\chi_M v\cdot\nabla h +
  h\cdot\nabla(\chi_M v) + (\chi_M v) v\cdot\nabla\chi_M\big]
  (\Delta^{-1}\nabla\nabla\cdot h - h)dx\\\nonumber
&&\quad +\ \int\big[-  v\Delta\chi_M +
  2\nabla_j(\nabla_j\chi_M v)\big](\Delta^{-1}\nabla\nabla\cdot h - h)\Big)dx.
\end{eqnarray}
Consequently, we have
\begin{eqnarray}\label{3-6}
&&\frac{1}{2}\frac{d}{dt}\|h\|_{L^2}^2 + \|\nabla
  h\|_{L^2}^2 = - \frac{1}{2}\int|h|^2v\cdot\nabla \chi_M dx\\\nonumber
&&\quad +\ \int(h\cdot\nabla h)\Delta^{-1}\nabla\nabla\cdot hdx +
  \int(\chi_M^2v\cdot\nabla v)(\Delta^{-1}\nabla\nabla\cdot h -
  h)dx\\\nonumber
&&\quad +\ \int\Big(\big[\chi_M v\cdot\nabla h +
  h\cdot\nabla(\chi_M v) + (\chi_M v) v\cdot\nabla\chi_M\big]
  \Delta^{-1}\nabla\times\nabla\times hdx\\\nonumber
&&\quad +\ \int\big[-  v\Delta\chi_M +
  2\nabla_j(\nabla_j\chi_M v)\big]\Delta^{-1}\nabla\times\nabla\times h\Big)dx.
\end{eqnarray}

We need estimate the right hand side of \eqref{3-6} term by term.
First of all, by \eqref{3-4}, by Sobolev imbedding inequality, it is
easy to see that
\begin{eqnarray}\nonumber
\Big|\frac{1}{2}\int|h|^2v\cdot\nabla \chi_M dx\Big| \lesssim
M^{-1}\|v\|_{L^\infty}\|h\|_{L^2}^2 \lesssim M^{-1}e^{-t/2}\|h\|_{L^2}^2.
\end{eqnarray}

Next, for the first term of the second line on the right hand side
of \eqref{3-6}, one can simply estimate that
\begin{eqnarray}\nonumber
\Big|\int(h\cdot\nabla h)\Delta^{-1}\nabla\nabla\cdot hdx\Big|
\lesssim \|h\|_{L^6}\|\nabla h\|_{L^2}\|\Delta^{-1}\nabla
  \nabla\cdot h\|_{L^3}
\lesssim \|h\|_{L^3}\|\nabla h\|_{L^2}^2.
\end{eqnarray}
Here we used the Sobolev imbedding $\|g\|_{L^6} \lesssim \|\nabla
g\|_{L^2}$ and the standard Calderon-Zygmund theory $\|Zg\|_{L^p}
\lesssim \|g\|_{L^p}$ for Riesz operator $Z$ and $1 < p < \infty$.
To treat the second term of the second line on the right hand side
of \eqref{3-6}, we first write that $$v\cdot\nabla v = - v\times
(\nabla\times v) + \frac{1}{2}\nabla |v|^2.$$  Using \eqref{NS-2},
we have
\begin{equation}\label{f}
\chi_M^2 v\cdot\nabla v = - \chi_M^2 v\times (D - 1)v -
\chi_M|v|^2\nabla\chi_M + \nabla(\frac{1}{2}\chi_M^2 |v|^2).
\end{equation}
Consequently, one has
\begin{eqnarray}\nonumber
&&\Big|\int(\chi_M^2v\cdot\nabla v)
  (\Delta^{-1}\nabla\nabla\cdot h - h)dx\Big|\\\nonumber
&&= \Big|\int\big[\chi_M^2 v\times (D - 1)v + \chi_M|v|^2
  \nabla\chi_M\big]\Delta^{-1}\nabla\times\nabla\times hdx\Big|\\\nonumber
&&\lesssim \big(M^{-1}\|v\|_{L^{12/5}(|x| \leq M)}^2 +
  \|(D - 1)v\|_{L^\infty}\|v\|_{L^{6/5}(|x| \leq M)}\big)\|
  \Delta^{-1}\nabla\times\nabla\times
  h\|_{L^6}\\\nonumber
&&\lesssim \big(M^{-1/2} +
  \|(|\xi| - 1)\widehat{v}\|_{L^1}M^{3/2}\big)e^{-t}\|
  \Delta^{-1}\nabla\times\nabla\times
  h\|_{L^6}\\\nonumber
&&\lesssim (M^{-1} + \delta^2M^3)e^{-2t} +
\frac{1}{16}\|\nabla
  h\|_{L^2}^2.
\end{eqnarray}

Now let us estimate the third line on the right hand side of
\eqref{3-6}. As above, we have
\begin{eqnarray}\nonumber
&&\Big|\int\big(\chi_M v\cdot\nabla h + h\cdot\nabla(\chi_M
  v)\big)\Delta^{-1}\nabla\times\nabla\times hdx\Big|\\\nonumber
&&\lesssim \big(\|\chi_M v\|_{L^\infty}\|\nabla h\|_{L^2}
  + \|\nabla(\chi_M v)\|_{L^\infty}\|h\|_{L^2}
  \big)\|\Delta^{-1}\nabla\times\nabla\times h\|_{L^2} \\\nonumber
&&\lesssim e^{-t/2}\|h\|_{L^2}^2 + \frac{1}{16}\|\nabla
  h\|_{L^2}^2,
\end{eqnarray}
and
\begin{eqnarray}\nonumber
&&\Big|\int(\chi_M v)v\cdot\nabla\chi_M\Delta^{-1}
  \nabla\times\nabla\times hdx\Big|\\\nonumber
&&\lesssim M^{-1}\|\sqrt{\chi_M}v\|_{L^{\frac{12}{5}}}^2
  \|\Delta^{-1}\nabla\times\nabla\times h\|_{L^6}\\\nonumber
&&\lesssim M^{-1}e^{- 2t} + \frac{1}{16}\|\nabla
  h\|_{L^2}^2.
\end{eqnarray}

For the last line,  we have
\begin{eqnarray}\nonumber
\Big|\int v\Delta\chi_M hdx\Big| \lesssim
M^{-2}\|v\|_{L^{\frac{6}{5}}(|x| \leq 2M)}\|h\|_{L^6} \lesssim
M^{-1} e^{- t} + \frac{1}{16}\|\nabla h\|_{L^2}^2.
\end{eqnarray}
Moreover, using integration by parts, we have
\begin{eqnarray}\nonumber
\Big|\int 2\nabla_j(\nabla_j\chi_M v)hdx\Big| \lesssim M^{-1}\|
v\|_{L^2(|x| \leq 2M)}\|\nabla h\|_{L^2} \lesssim M^{-1}e^{- t} + \frac{1}{16}\|\nabla h\|_{L^2}^2.
\end{eqnarray}

Inserting all the above estimates into \eqref{3-6}, we arrive at
\begin{eqnarray}\label{3-7}
&&\frac{1}{2}\frac{d}{dt}\|h\|_{L^2}^2 + (\frac{5}{16} -
  C\|h\|_{L^3})\|\nabla h\|_{L^2}^2\\\nonumber
&&\lesssim e^{-t/2}\|h\|_{L^2}^2 +
  (M^{-1} + \delta^2M^3)e^{- t}.
\end{eqnarray}

Now let us apply the ${\rm curl}$ operator to \eqref{h-eqn} and then
take the $L^2$ inner product of the resulting equation with ${\rm
curl}\ h$ to get
\begin{eqnarray}\label{3-8}
&&\frac{1}{2}\frac{d}{dt}\|\nabla\times h\|_{L^2}^2 + \|\nabla
  \nabla\times  h\|_{L^2}^2\\\nonumber
&&= - \int\nabla\times(h\cdot\nabla h)\nabla\times h dx + \int
  (\nabla\times f)\cdot(\nabla\times h)dx.
\end{eqnarray}
We first deal with the first term on the right hand side of
\eqref{3-8}. Using integration by parts and Hodge decomposition, we
estimate that
\begin{eqnarray}\nonumber
\Big|\int\nabla\times(h\cdot\nabla h)\nabla\times h
  dx\Big| &\lesssim& \|h\|_{L^3}\|\nabla h\|_{L^6}\|\nabla\times\nabla\times
  h\|_{L^2}\\\nonumber
&\lesssim& \|h\|_{L^3}\|\nabla\nabla\times h\|_{L^2}^2 +
  \|h\|_{L^3}\|\nabla\cdot h\|_{L^6}\|\nabla\times\nabla\times
  h\|_{L^2}.
\end{eqnarray}
Recall the second equation in \eqref{h-eqn}, one has
\begin{eqnarray}\nonumber
\|\nabla\cdot h\|_{L^6} = \|v\cdot\nabla\chi_M\|_{L^6} \lesssim
M^{-1}e^{- t/2}.
\end{eqnarray}
Using interpolation $\|h\|_{L^3} \lesssim
\|h\|_{L^2}^{\frac{1}{2}}\|\nabla h\|_{L^2}^{\frac{1}{2}}$, one
finally has
\begin{eqnarray}\nonumber
&&\Big|\int\nabla\times(h\cdot\nabla h)\nabla\times h
  dx\Big|\\\nonumber
&&\lesssim \|h\|_{L^3}\|\nabla\nabla\times
  h\|_{L^2}^2 + \frac{1}{16}\big(\|\nabla h\|_{L^2}^2 + \|\nabla\nabla\times
  h\|_{L^2}^2\big) + M^{-4}e^{- 2t}\|h\|_{L^2}^2.
\end{eqnarray}

For the second term on the right hand side of \eqref{3-8}, we first
write it as follows:
\begin{eqnarray}\label{3-9}
&&\int(\nabla\times f)\cdot(\nabla\times h)dx\\\nonumber &&= \int
  \nabla\times\big( v\Delta\chi_M +
  2\nabla\chi_M\cdot\nabla v
  - \chi_M(v\cdot\nabla\chi_M)v\big)\nabla\times h dx\\\nonumber
&&\quad +\ \int \nabla\times(\chi_M^2v\times (D - 1)v -
  \frac{1}{2}\chi_M^2\nabla|v|^2)\nabla\times h dx\\\nonumber &&\quad -\ \int
\nabla\times\big(h\cdot\nabla(\chi_M v) + \chi_M v
  \cdot\nabla h \big)\nabla\times h dx.
\end{eqnarray}
The first line on the right hand side of \eqref{3-9} is treated as
follows:
\begin{eqnarray}\nonumber
&&\Big|\int \nabla\times\big( v\Delta\chi_M +
  2\nabla\chi_M\nabla v - \chi_M(v\cdot\nabla\chi_M)v\big)\nabla\times h dx\Big|\\\nonumber
&&\lesssim \big(M^{-2}\|v\|_{L^2(|x| \leq 2M)} + M^{-1}\|\nabla
  v\|_{L^2(|x| \leq 2M)} + M^{-1}\|v\|_{L^4}^2\big)\|\nabla\nabla\times h\|_{L^2}\\\nonumber &&\leq
M^{-1}e^{- t} + \frac{1}{16}\|\nabla\nabla\times
h\|_{L^2}^2.
\end{eqnarray}
For the second term on the right hand side of \eqref{3-9}, we first
have
\begin{eqnarray}\nonumber
\Big|\int\nabla\times (\frac{1}{2}\chi_M^2\nabla|v|^2)\nabla\times
  hdx\Big|
&=& \Big|\int\nabla\times (\frac{1}{2}|v|^2\nabla
  \chi_M^2)\nabla\times hdx\Big|\\\nonumber
&\lesssim& M^{-2}e^{- 2t} +
  \frac{1}{16}\|\nabla\nabla\times h\|_{L^2}^2.
\end{eqnarray}
On the other hand, we estimate that
\begin{eqnarray}\nonumber
&&\Big|\int \nabla\times(\chi_M^2v\times (D - 1)v)\nabla\times h
  dx\Big|\\\nonumber
&&\lesssim \|v\|_{L^2(|x| \leq M)}\|(D - 1)
  v\|_{L^\infty}\|\nabla\nabla\times h\|_{L^2}\\\nonumber
&&\lesssim  \delta^2Me^{- 2t} +
\frac{1}{16}\|\nabla\nabla\times h\|_{L^2}^2
\end{eqnarray}
We estimate the last line on the right hand side of \eqref{3-9} as
follows:
\begin{eqnarray}\nonumber
&&\Big|\int \nabla\times\big( -  h\cdot\nabla(\chi_M v) - \chi_M v
  \cdot\nabla h\big)\nabla\times h dx\Big|\\\nonumber
&&\leq \big(\|h\|_{L^2}\|\nabla(\chi_M v)\|_{L^\infty} + \|\chi_M
  v\|_{L^\infty}\|\nabla h\|_{L^2}\big)\|\nabla\nabla\times h\|_{L^2}\\\nonumber
&&\lesssim e^{-t/2}\big(\|h\|_{L^2} + \|\nabla
  h\|_{L^2}\big)\|\nabla\nabla\times h\|_{L^2}\\\nonumber
&&\lesssim e^{- t}\big(\|h\|_{L^2}^2 + \|\nabla\times
  h\|_{L^2}^2 + \|v\cdot\nabla\chi_M\|_{L^2}^2\big) + \frac{1}{16}\|\nabla\nabla\times
  h\|_{L^2}^2\\\nonumber
&&\lesssim e^{- t}\big(\|h\|_{L^2}^2 + \|\nabla\times
  h\|_{L^2}^2\big) + M^{-1}e^{-2t}
  + \frac{1}{16}\|\nabla\nabla\times h\|_{L^2}^2.
\end{eqnarray}
We finally arrive at
\begin{eqnarray}\label{3-10}
&&\frac{1}{2}\frac{d}{dt}\|\nabla\times h\|_{L^2}^2 +
  \big(\frac{5}{16} - C\|h\|_{L^3}\big)\|\nabla \nabla\times  h\|_{L^2}^2\\\nonumber
&&\lesssim e^{- t}\big(\|h\|_{L^2}^2 + \|\nabla\times
  h\|_{L^2}^2\big) + \frac{1}{16}\|\nabla h\|_{L^2}^2\\\nonumber
&&\quad +\ (\delta^2M + M^{-1})e^{- t} .
\end{eqnarray}

Now let us add up \eqref{3-7} and \eqref{3-10} to yield that
\begin{eqnarray}\label{3-11}
&&\frac{1}{2}\frac{d}{dt}\big(\|h\|_{L^2}^2 +
 \|\nabla\times h\|_{L^2}^2\big) +
  \big(\frac{3}{8} - C\|h\|_{L^3}\big)\big(\|\nabla h\|_{L^2}^2
  + \|\nabla \nabla\times  h\|_{L^2}^2\big)\\\nonumber
&&\lesssim e^{- t/2}\big(\|h\|_{L^2}^2 + \|\nabla\times
  h\|_{L^2}^2\big) + (\delta^2M  + \delta^2M^3 + M^{-1})e^{- t}.
\end{eqnarray}
If there holds
\begin{eqnarray}\label{3-12}
\delta \lesssim M^{-2},\quad  C\|h\|_{L^3} \leq \frac{3}{8},
\end{eqnarray}
on some time interval $0 \leq t \leq T$, then \eqref{3-11} implies
\begin{eqnarray}\label{3-13}
\big(\|h(t, \cdot)\|_{L^2}^2 + \|\nabla\times h(t,
\cdot)\|_{L^2}^2\big) + \int_0^T\big(\|\nabla h\|_{L^2}^2 + \|\nabla
\nabla\times  h\|_{L^2}^2\big)ds \lesssim M^{-1}
\end{eqnarray}
on the same time interval. Since
\begin{eqnarray}\label{3-14}
C\|h\|_{L^3} \lesssim \big(\|h\|_{L^2} + \|\nabla\times h\|_{L^2} +
\|\nabla\cdot h\|_{L^2}\big) \lesssim M^{-\frac{1}{2}},\quad 0 \leq
t \leq T,
\end{eqnarray}
one sees that the second inequality in \eqref{3-12} is verified
provided that $M$ is sufficiently large. A standard continuation
argument simply implies that \eqref{3-13} holds for all time $t \geq
0$. Then one has
\begin{eqnarray}\label{3-14}
\|h\|_{H^1} \lesssim \big(\|h\|_{L^2} + \|\nabla\times h\|_{L^2} +
\|\nabla\cdot h\|_{L^2}\big) \lesssim M^{-\frac{1}{2}},
\end{eqnarray}
for all time $t \geq 0$. Since $u = h + \chi_M v$, one has $u \in
L^\infty(0, T; H^1)$, which is sufficient for the global
regularity of $u$.

%----------------------------------------------------------------------------acknowledgement
\section*{Acknowledgement}
Zhen Lei was in part supported by NSFC (grant No.11171072 and
11222107), NCET-12-0120, National Support Program for Young Top-Notch Talents, Shanghai
Shu Guang project
and Shanghai Talent
Development Fund. Fanghua Lin is partially supported by an NSF
grant, DMS-1065964 and DMS-1159313. Yi Zhou is partially supported
by the NSFC (No.11031001) and  "973 program" (grant No. 2013CB834100). Zhen Lei and Yi Zhou are also partially
supported by NSFC (No. 11421061) and SGST
09DZ2272900.

%-----------------------------------------------------------------------------bibliography


\begin{thebibliography}{00}

\bibitem{Chemin} J.-Y. Chemin, I. Gallagher and
M. Paicu,  \textit{Global regularity for some classes of large
solutions to the Navier-Stokes equations}. Ann. of Math. (2)
\textbf{173} (2011), no. 2, 983--1012.

\bibitem{CM} P. Constantin and A. Majda,  \textit{The Beltrami spectrum for
incompressible fluid flows}. Comm. Math. Phys. 115 (1988), no. 3, 435--456.

\bibitem{Fefferman1} Charles L. Fefferman,
http://www.claymath.org/millennium/ Navier-Stokes\_Equations/

\bibitem{HLL} T. Hou, Z. Lei and Congming Li,  \textit{Global regularity
of the 3D axi-symmetric Navier-Stokes equations with anisotropic
data}. Comm. Partial Differential Equations \textbf{33} (2008), no.
7-9, 1622--1637.



\bibitem{L}
 O. A. Ladyzhenskaya,
\textit{Unique global solvability of the three-dimensional Cauchy
problem for the Navier-Stokes equations in the
 presence of axial symmetry}, Zap. Naucn. Sem.
Leningrad. Otdel. Math. Inst. Steklov. (LOMI) \textbf{7} (1968),
155--177 (Russian).

\bibitem{Majda} A. Majda and A.Bertozzi, \textit{ Vorticity and
Incompressible Flow}. Cambridge Texts in Applied Mathematics,
Cambridge University Press 2002.

\bibitem{UY}
 M. R. Ukhovskii and V. I. Yudovich,
\textit{Axially symmetric flows of ideal and viscous fluids filling
the whole space}, J. Appl. Math. Mech., 32   (1968), 52--61.

\end{thebibliography}
\end{document}